\documentclass[10pt, twoside]{amsart}
\usepackage[utf8]{inputenc}
\usepackage[colorlinks=true,linkcolor=black,citecolor=blue,urlcolor=blue]{hyperref}

\usepackage[all,cmtip]{xy}
\usepackage{tikz-cd}
\usepackage{amscd}

\usepackage{xpatch}
\makeatletter   
\xpatchcmd{\@tocline}
{\hfil\hbox to\@pnumwidth{\@tocpagenum{#7}}\par}
{\ifnum#1<0\hfill\else\dotfill\fi\hbox to\@pnumwidth{\@tocpagenum{#7}}\par}
{}{}
\makeatother    
\makeatletter
 \def\l@subsection{\@tocline{2}{0pt}{4pc}{6pc}{}}
\def\l@subsubsection{\@tocline{3}{0pt}{8pc}{8pc}{}}
 \makeatother
\usepackage{graphicx}              % to include figures
\usepackage{amsmath, bm}               % great math stuff
\usepackage{amsfonts}              % for blackboard bold, etc
\usepackage{amsthm}                % better theorem environments
\usepackage{amssymb}
\usepackage{mathtools}             %needed for normbg command
\usepackage{stackengine}

\usetikzlibrary{matrix}
\usepackage{multirow}
\usetikzlibrary{shapes}

\usepackage{enumitem} 

\usepackage[all]{xy}
\usepackage{amscd}

\definecolor{frenchrose}{rgb}{0.96, 0.29, 0.54}

\allowdisplaybreaks
\usepackage{color}
\usepackage{fp}

\newcommand{\Z}{\mathbb{Z}}

\newcommand{\C}{\mathbb{C}}
\newcommand{\R}{\mathbb{R}}
\newcommand{\T}{\mathbb{T}}

\newcommand{\cL}{\mathcal{L}}
\newcommand{\cF}{\mathcal{F}}
\newcommand{\cP}{\mathcal{P}}

\newcommand{\op}{\mathrm}
\newcommand{\supp}{\mathrm{supp}}

\DeclarePairedDelimiterX{\normb}[1]{\lVert}{\rVert}{#1}

\numberwithin{equation}{section}
\newtheorem{theorem}{Theorem}[section]
\newtheorem{lemma}[theorem]{Lemma}
\newtheorem{proposition}[theorem]{Proposition}
\newtheorem{corollary}[theorem]{Corollary}

\let\origproofname\proofname
\renewcommand{\proofname}{\upshape\textbf{\origproofname}}

\newcommand{\vf}{\mathbf}

\newtheorem*{theorem*}{Theorem}

\theoremstyle{definition}
\newtheorem{example}[theorem]{Example}
\newtheorem{definition}[theorem]{Definition}
\newtheorem{remark}[theorem]{Remark}
\newtheorem{notation}[theorem]{Notation}

\usepackage{indentfirst}

\begin{document}
\title{Restricted isometries in finite-dimensional $\ell^p$-spaces}

\author[Dalfollo-Daley]{Olivia Dalfollo-Daley}
\author[Delf\'{\i}n]{Alonso Delf\'{\i}n}
\author[Fernandez]{Christina Fernández}
\author[Mars]{Cameron Mars}
\author[Xia]{Xintan Xia}

\address[O. Dalfollo-Daley, C. Fernández, C. Mars,
and X. Xia]
{Department of Mathematics, University of Colorado Boulder,
Boulder, CO 80309, USA}

\address[A. Delf\'{\i}n]
{Department of Mathematics and Statistics,
Swarthmore College, Swarthmore, PA 19081, USA}

\email[O. Dalfollo-Daley]{oliviadalfollodaley@gmail.com}
\email[A. Delf\'{\i}n]{adelfin1@swarthmore.edu}
\email[C. Fernández]{ninaffb2004@gmail.com}
\email[C. Mars]{cameronjmars23@gmail.com}
\email[X. Xia]{xintan.xia@colorado.edu}

\date{\today}
\subjclass[2020]{Primary 46B04, 47B38; Secondary 15A60, 47A30, 05A15}
\keywords{$\ell^p$-spaces, restricted isometries, Banach-Lamperti theorem, set transformations}

\begin{abstract}
For $d,n\in\Z_{\geq1}$ and
$p\in[1,\infty)\setminus\{2\}$, we characterize all
linear maps $s\colon\ell_d^p\to\ell_n^p$ for which
there is $E\subseteq X_d$ such that
$\|s(\xi)\|_p=\|\xi|_E\|_p$ for every
$\xi\in\ell_d^p$. We call these maps
\emph{restricted isometries}. Our Lamperti-type
theorem gives a unique description of each such
operator using a set transformation, a positive
amplitude function, and a unimodular phase function.
In particular, it includes noninjective and
nonsurjective operators, even when the dimensions
differ. We further characterize restricted
isometries between finite $\ell^p$-spaces with arbitrary
positive atomic weights. Applications include
formulas counting set transformations and uniform
restricted isometries, as well as a geometric
description of all restricted isometries with fixed
column supports. 
\end{abstract}

\maketitle
\tableofcontents

\section{Introduction}

The Banach-Lamperti theorem describes linear isometries of $L^p$-spaces
when $p\neq2$ \cite{Lamp58}. One familiar consequence is that, for $d \in \Z_{\geq 1}$, every
surjective isometry of $\ell_d^p$ permutes the coordinates and multiplies
them by scalars in $\T$; see also \cite[Section~2]{Gardella21} for a
modern presentation. The situation changes when the operator is not
surjective. For example, a coordinate may be sent to several coordinates,
as long as the images of distinct coordinates have disjoint supports.
Indeed, Lamperti's original work also treats nonsurjective isometries.

We study a class that also allows the operator to have a nontrivial
kernel. For $d, n \in \Z_{\geq 1}$, a \emph{restricted isometry} is a linear map
$s\colon\ell_d^p\to\ell_n^p$ such that
\[
\|s(\xi)\|_p=\|\xi|_E\|_p
\]
for all $\xi\in\ell_d^p$ and some $E\subseteq \{1, \ldots, d\}$. Thus the operator
is an isometry on the coordinates in $E$ and vanishes on the remaining
coordinates. In particular, we do not require that the dimensions of domain 
and codomain agree,
that $s$ be injective, or that $s$ be surjective.

Our main result, Theorem~\ref{thm:classification}, gives a
Lamperti-type characterization of \emph{all} these operators for
$p\in[1,\infty)\setminus\{2\}$. The nonzero columns have disjoint
supports and each has $p$-norm one. We express this description by a
set transformation $(E,F,S)$, a positive \emph{amplitude function}
$\vf{c}$, and a unimodular \emph{phase function} $g$. These data are
uniquely determined by the operator, including when the operator is
zero. To our knowledge, this is the first explicit characterization
stated for the full class of finite-dimensional restricted isometries in
this form. As in Lamperti's theorem, the key ingredient
is an elementary inequality closely related
to Clarkson's inequalities (see
\cite[Lemma~3.2.1]{Flem03}), but this result also
keeps track of the possible zero columns, the set transformation, and
the amplitudes and phases of each nonzero matrix entry.

There is an important distinction between our result and a
measure-theoretic formulation in N.~C.~Phillips's work on $L^p$-analogues of
Cuntz algebras \cite{Phillips12}. Phillips's semispatial set transformations
provide a framework for isometries that need not be surjective.
However, the definition of a semispatial system
\cite[Definition~6.3]{Phillips12} requires the measure restricted to
the range $\sigma$-algebra to be $\sigma$-finite, while
\cite[Theorem~6.9]{Phillips12} asserts that every isometric embedding
between $L^p$-spaces over $\sigma$-finite measure spaces has such a
representation. The latter assertion, with that definition, is not
correct as stated: the range $\sigma$-algebra may fail to be
$\sigma$-finite even when the original measure spaces are.
We give a concrete example in
Example~\ref{rem:phillips}. Although this issue
does not arise for finite atomic spaces, there
is a second limitation of the semispatial
formulation even in finite dimensions. Namely,
the Radon--Nikodym derivative used in
\cite[Definition~6.4]{Phillips12} is measurable
with respect to the range $\sigma$-algebra of
the set transformation, and therefore cannot
describe arbitrary amplitudes within its fibers.
Example~\ref{ex:nonsemispatial} exhibits an
isometry between finite-dimensional $\ell^p$-spaces
which is not semispatial in Phillips's sense.
Our classification includes such operators by
allowing independent positive amplitudes subject
to the normalization conditions of
Theorem~\ref{thm:classification}.

Other recent uses of Lamperti's theorem arise in the theory of
$L^p$-operator algebras, including the rigidity results of Y.~Choi,
E.~Gardella, and H.~Thiel \cite{ChoiGardellaThiel24} and work on groupoid
algebras and their isometries
\cite{GPT25rigidity,GPT25fullgroups,BKM25,HetOrt23}.
These works answer powerful questions about rigidity of Banach algebras associated
to groupoids, where invertible
isometries and suitable notions of partial isometry play important
roles. The class of restricted isometries is not confined to
Moore--Penrose invertible operators in $\cL(\ell_d^p)$: a nonzero
column may have several nonzero entries, and Proposition~\ref{prop:mp}
characterizes exactly which restricted isometries are Moore--Penrose
invertible. Our operators also need not be spatial partial isometries
in Phillips's sense: by \cite[Lemma~6.12(2)]{Phillips12},
such operators have coordinate subspaces as their ranges,
while this need not hold for restricted isometries, as
illustrated in Remark~\ref{IsomsubsetPI}. 
Further, as shown in Example~\ref{ex:nonsemispatial},
our class is not even contained in the
semispatial partial isometries of \cite{Phillips12}.
Thus, even though we work in finite dimensions,
our classification includes operators outside these commonly studied
classes. Related questions about finite-dimensional
isometric embeddings, including quaternionic spaces and different
exponents, appear in \cite{LyubichShatalova05}.

The classification also has several consequences. It extends directly
to \emph{positive atomic weights} on both the domain and codomain
(Corollary~\ref{cor:weighted}), with no assumption that the weights
agree. We count the support transformations and certain uniform
restricted isometries, both over $\C$ and with real signs. Finally,
Proposition~\ref{prop:strata} describes the topology and dimension of
the family of restricted isometries with a fixed support transformation:
positive amplitudes vary in products of open simplices, while phases
vary independently on circles. This explains how a single support
pattern can give infinitely many operators.

%Our classification goes beyond the spatial and semispatial
%partial-isometry models considered in the existing
%$L^p$-operator-algebra literature. In particular,
%Example~\ref{ex:nonsemispatial} shows that even a
%finite-dimensional isometric embedding need not be
%semispatial in Phillips's sense. Thus, the freedom in
%our amplitude function is essential to describing the
%full class of restricted isometries. Beyond the
%classification itself, we obtain a weighted version,
%explicit counting formulas, and a geometric description
%of the families of restricted isometries with fixed
%supports.

A further strength of our approach is that the main
theorem and its applications are proved using only finite
sets, matrices, and elementary properties of the
$\ell^p$-norm. In particular, no measure theory or
advanced functional analysis is needed to follow the
arguments, making the results accessible to a broad
mathematical audience. The only exceptions are
Example~\ref{rem:phillips},
Example~\ref{ex:nonsemispatial}, and
Proposition~\ref{prop:mp}, where we compare our work
with existing results in the literature.

The paper is organized as follows. Section~2 fixes our notation. In Section~3 we develop
the set transformations, prove the main classification,
derive the weighted version, and compare our operators
with Moore--Penrose partial isometries. Section~4
contains the counting results and the geometric
description of the fixed-support families.

\section*{Acknowledgments} 
Part of this work was carried out during an REU(G) project funded by the Department of Mathematics at 
the University of Colorado Boulder in the Summer of 2025. The authors are grateful for the support provided. 
In particular, we would like to thank Nat Thiem for the excellent organizational work.

\section{Preliminaries and Notation}

We begin with the basic definitions and notational conventions. 

\begin{notation}
    $\mathbb{T} = \{ z \in \C \colon |z|=1\}.$ 
\end{notation}

\begin{notation}
    For $d \in \Z_{\geq 1}$ we fix the notation $X_d=\{1, \ldots, d\}$. 
\end{notation}

\begin{definition}
If $J$ is any finite nonempty set and $p \in [1, \infty]$, we denote by $\ell^p(J) \coloneqq \ell^p(I,\C)$ the space of functions $\xi \colon J \to \C$ equipped with the usual $p$-norm: 
\[
\| \xi \|_p 
= 
\begin{cases} \displaystyle{
\Big(\sum_{j \in J} |\xi(j)|^p \Big)^{1/p} }& \text{ if } p \in [1,\infty) \\
\displaystyle{\max_{j \in J}} |\xi(j)| & \text{ if } p=\infty
\end{cases}.
\]
In particular, for $d \in\Z_{\geq 1}$ we let $\ell_d^p \coloneqq \ell^p(X_d)$. 
\end{definition}

\begin{remark}
    We also set $\ell_{d,\R}^p \coloneqq \ell^p(X_d,\R)$ when only working with real valued functions. This way we have the obvious isometric identifications: 
    \[
    \ell_d^p \cong (\C^d, \|-\|_p), \ \     \ell_{d, \R}^p \cong (\R^d, \|-\|_p),
    \]
both given via the map 
    \[
    \xi \mapsto \begin{bmatrix}
        \xi(1)\\
        \vdots\\
        \xi(d)
    \end{bmatrix}.
    \]
    Further, for each $j \in \{1, \ldots, d\}$ we define, as usual, the elements $\delta_j \colon X_d \to \C$ 
    by 
    \[
    \delta_j(k) = \delta_{j,k} = \begin{cases}
        1 & \text{ if } k=j\\
        0 & \text{ if } k \neq j
    \end{cases}
    \]
    then the list $\delta_1, \ldots, \delta_d$ is a basis for $\ell_d^p$ for any $p\in [1,\infty]$. Notice that when working over $\R$, this list is also a basis for  $\ell_{d,\R}^p$. We will not distinguish between the function $\xi\colon X_d\to \C$ and its associated vector $\xi \in \C^d$. For instance, the $d \times d$ identity matrix can be written as 
    \[
    \op{id}_d = \begin{bmatrix}
        \delta_1 & 
        \ldots & 
        \delta_d
    \end{bmatrix}.
    \]
\end{remark}

If $V$ and $W$ are normed spaces, we denote by $\cL(V,W)$ 
the space of bounded linear maps $a \colon V \to W$ equipped with the operator norm
\[
\| a \|_{V \to W} = \sup\{\|a\xi\|_{W} \colon \|\xi\|_V \leq 1\}.
\]
\begin{definition}
   An \textit{isometry} between two normed spaces $V$ and $W$ is a map $s \in \cL(V,W)$ such that $\|s(\xi)\|_W = \|\xi\|_V$
for all $\xi \in V$. We will denote by $\op{Isom}(V,W) \subseteq \cL(V,W)$
the subset consisting of all isometries from $V$ to $W$. If $V=W$ we simply put $\op{Isom}(V) \subseteq \cL(V)$ to denote the isometries from $V$ to itself. 
\end{definition} 

\begin{remark}
Any isometry $s \in \op{Isom}(V,W)$ satisfies $\|s\|_{V \to W}=1$ and is an injective function. Further, notice that if $s \in \op{Isom}(V)$  is invertible, then $s^{-1} \in \op{Isom}(V)$, which is always the case when $V$ is finite dimensional.  
\end{remark}

\section{Restricted Isometries}\label{sec:restricted}

We will work with the general notion of \textit{restricted isometries}.
To do so, we first need to restrict the support of elements in $\ell_d^p$.

\begin{definition}
If $\xi\in\ell_d^p$ and $E\subseteq X_d$, we define
$\xi|_E\colon X_d\to\C$ by
\[
\xi|_E(j)=
\begin{cases}
\xi(j)&\text{if }j\in E,\\
0&\text{if }j\in X_d\setminus E.
\end{cases}
\]
\end{definition}

\begin{remark}\label{factSPI}
Observe that $\xi|_E=\xi$ if and only if $\supp(\xi)\subseteq E$.
We also have $\supp(\xi|_E)\subseteq E$ and $\|\xi|_E\|_p\leq\|\xi\|_p$.
\end{remark}

\begin{definition}\label{def:ri}
Let $d,n\in\Z_{\geq1}$ and $p\in[1,\infty)$. We say that
$s\in\cL(\ell_d^p,\ell_n^p)$ is a \textit{restricted isometry} if
there is a subset $E\subseteq X_d$ such that
\[
\|s(\xi)\|_p=\|\xi|_E\|_p
\]
for every $\xi\in\ell_d^p$. We denote the set of all restricted
isometries by $\op{RI}(\ell_d^p,\ell_n^p)$, and write
$\op{RI}(\ell_d^p)$ when $n=d$.
\end{definition}

\begin{lemma}\label{lem:initial}
Let $d,n\in\Z_{\geq1}$, $p\in[1,\infty)$, and
$s\in\op{RI}(\ell_d^p,\ell_n^p)$. The set $E$ from Definition
\ref{def:ri} is uniquely determined and satisfies
\[
E=\{j\in X_d\colon s(\delta_j)\neq0\},\qquad
\ker(s)=\ell^p(X_d\setminus E).
\]
Moreover, $\|s\|_{\ell_d^p \to \ell_n^p}\leq1$, and $\|s\|_{\ell_d^p \to \ell_n^p}=1$ if $s\neq0$.
\end{lemma}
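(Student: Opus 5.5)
The plan is to evaluate the defining identity $\|s(\xi)\|_p=\|\xi|_E\|_p$ on the standard basis vectors and on vectors supported off $E$. Both descriptions of $E$ and of $\ker(s)$ follow from this, and the norm estimate then comes from Remark~\ref{factSPI}. No step should present a real obstacle. The only point needing care is that uniqueness of $E$ must be derived from the formula $E=\{j\in X_d\colon s(\delta_j)\neq0\}$, which depends only on $s$, rather than argued directly.

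First, fix any $E\subseteq X_d$ witnessing Definition~\ref{def:ri}. For $j\in X_d$ we have $\delta_j|_E=\delta_j$ if $j\in E$ and $\delta_j|_E=0$ otherwise. Hence $\|s(\delta_j)\|_p$ equals $1$ when $j\in E$ and $0$ when $j\notin E$. This gives $E=\{j\in X_d\colon s(\delta_j)\neq0\}$. The right-hand side depends only on $s$, so any two witnessing sets coincide, which proves uniqueness.

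Next, for the kernel, $\xi\in\ker(s)$ holds iff $\|\xi|_E\|_p=0$. That holds iff $\xi(j)=0$ for all $j\in E$, i.e.\ iff $\supp(\xi)\subseteq X_d\setminus E$. Under the identification of functions on $X_d\setminus E$ with functions on $X_d$ vanishing on $E$, this set is exactly $\ell^p(X_d\setminus E)$; I would state this identification explicitly, since the statement uses it implicitly.

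Finally, for the norm, Remark~\ref{factSPI} gives $\|s(\xi)\|_p=\|\xi|_E\|_p\leq\|\xi\|_p$, so $\|s\|_{\ell_d^p\to\ell_n^p}\leq1$. If $s\neq0$, then some $s(\delta_j)\neq0$, since the $\delta_j$ form a basis. By the first step this $j$ lies in $E$ and $\|s(\delta_j)\|_p=1=\|\delta_j\|_p$. So the supremum defining the operator norm is attained, and it equals $1$.
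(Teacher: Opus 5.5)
Your proposal is correct and follows the paper's proof essentially step for step. You evaluate the defining identity on the basis vectors $\delta_j$ to identify $E$ and obtain its uniqueness, characterize the kernel by $\xi|_E=0$, and get $\|s\|\leq1$ from Remark~\ref{factSPI}, with equality attained at some $\delta_j$ with $j\in E$ when $s\neq0$. Your explicit identification of $\ell^p(X_d\setminus E)$ with the functions on $X_d$ vanishing on $E$ is a harmless clarification that the paper leaves implicit.
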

\begin{proof}
By definition, $\|s(\delta_j)\|_p=1$ when $j\in E$ and is zero
otherwise, which proves uniqueness of $E$. Also $s(\xi)=0$ if and only
if $\xi|_E=0$. The estimate $\|s\|_{\ell_d^p \to \ell_n^p}\leq1$ follows from Remark
\ref{factSPI}, and equality holds if $E\neq\varnothing$.
\end{proof}

\begin{remark}\label{IsomsubsetPI}
An isometry is a restricted isometry with $E=X_d$, but a restricted
isometry need not be injective. For example, for every
$p\in[1,\infty)$,
\begin{equation}\label{spi<-isom}
s=\begin{bmatrix}1&0&0\\0&0&1\end{bmatrix}
\in\op{RI}(\ell_3^p,\ell_2^p),
\end{equation}
where $E=\{1,3\}$. Restricted isometries also need not send every
nonzero basis vector to a vector supported on one coordinate. For
instance,
\[
s=\frac{1}{2^{1/p}}\begin{bmatrix}1&0&0\\1&0&0\end{bmatrix}
\in\op{RI}(\ell_3^p,\ell_2^p).
\]
\end{remark}

We now work with set transformations. They are closely related to the
measurable set transformations in \cite{Phillips12}, although for us
the domain and codomain are finite sets with counting measure.

\begin{definition}\label{SETtransf}
Let $d,n\in\Z_{\geq1}$. A \textit{set transformation} from $X_d$
to $X_n$ is a triple $(E,F,S)$, where $E\subseteq X_d$,
$F\subseteq X_n$, and $S\colon\cP(E)\to\cP(F)$ is a function
satisfying the following conditions:
\begin{enumerate}
\item\label{Eempty} $S(E_1)=\varnothing$ if and only if $E_1=\varnothing$;
\item\label{Eunion} $S(E_1\cup E_2)=S(E_1)\cup S(E_2)$ for any disjoint
$E_1,E_2\subseteq E$;
\item\label{Eminus} $S(E\setminus E_1)=F\setminus S(E_1)$ for any
$E_1\subseteq E$.
\end{enumerate}
We write $(E,F,S)\colon X_d\to X_n$ and denote the set of all such
triples by $\op{ST}(d,n)$.
\end{definition}

\begin{remark}\label{RMKsetransf}
It follows from the definition that $S(E)=F$, and that
$S(E_1\setminus E_2)=S(E_1)\setminus S(E_2)$ for
$E_1,E_2\subseteq E$. In particular $S$ is injective, but it need
not be surjective. We say that a set transformation is
\textit{bijective} when $S\colon\cP(E)\to\cP(F)$ is surjective.
In this case $S$ induces a bijection $E\to F$.
\end{remark}

\begin{definition}\label{ptStTR}
Let $d,n\in\Z_{\geq1}$. A \textit{point-set transformation}
from $X_d$ to $X_n$ is a triple $(E,F,T)$, where $E\subseteq X_d$,
$F\subseteq X_n$, and $T\colon E\to\cP(F)$ is a function satisfying
\begin{enumerate}
\item $T(j)\neq\varnothing$ for all $j\in E$;
\item $T(i)\cap T(j)=\varnothing$ when $i\neq j$;
\item $\displaystyle\bigcup_{j\in E}T(j)=F$.
\end{enumerate}
\end{definition}

A set transformation is completely determined by where it sends the
singletons $\{j\}$ with $j\in E$. More precisely, we have the
following correspondence.

\begin{proposition}\label{pointmap}
Let $d,n\in\Z_{\geq1}$. The assignment from set transformations
to point-set transformations defined by $T_S\colon E\to\cP(F)$,
\[
T_S(j)=S(\{j\}),\qquad j\in E,
\]
is a bijection. Its inverse sends $(E,F,T)$ to $(E,F,S_T)$, where
$S_T\colon\cP(E)\to\cP(F)$ is given by
\[
S_T(E_1)=\bigcup_{j\in E_1}T(j),\qquad E_1\subseteq E.
\]
The union is $\varnothing$ when $E_1=\varnothing$.
\end{proposition}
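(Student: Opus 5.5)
We check four things in order: the forward map lands in point-set transformations, the backward map lands in set transformations, and the two composites are identities. Throughout, $E$ and $F$ are carried along unchanged.

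\textbf{Forward map.} Let $(E,F,S)\in\op{ST}(d,n)$. For $j\in E$, condition~(\ref{Eempty}) of Definition~\ref{SETtransf} gives $T_S(j)=S(\{j\})\neq\varnothing$. For $i\neq j$ in $E$, Remark~\ref{RMKsetransf} gives $S(\{i\})\cap S(\{j\}) \subseteq S(\{i\}\setminus\{j\})\cap S(\{j\})$, which is empty. Alternatively, one can use condition~(\ref{Eminus}): $S(\{i\})\subseteq S(E\setminus\{j\})=F\setminus S(\{j\})$, where the inclusion $S(\{i\})\subseteq S(E\setminus\{j\})$ comes from condition~(\ref{Eunion}). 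For the covering property, write $E$ as the disjoint union of its singletons and apply condition~(\ref{Eunion}) inductively over the finite set $E$. This gives
\[
F=S(E)=\bigcup_{j\in E}S(\{j\}),
\]
using $S(E)=F$ from Remark~\ref{RMKsetransf}. If $E=\varnothing$, then $F=S(\varnothing)=\varnothing$ by condition~(\ref{Eempty}), so the covering condition still holds.

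\textbf{Backward map.} Let $(E,F,T)$ be a point-set transformation. Clearly $S_T$ maps $\cP(E)$ to $\cP(F)$.
\begin{enumerate}
\item Condition~(\ref{Eempty}): the union is empty exactly when $E_1=\varnothing$, since every $T(j)$ is nonempty.
\item Condition~(\ref{Eunion}): a union indexed by $E_1\cup E_2$ splits as the union over $E_1$ together with the union over $E_2$. This does not even need disjointness.
\item Condition~(\ref{Eminus}): because the $T(j)$ are pairwise disjoint and cover $F$, every $y\in F$ lies in $T(j)$ for exactly one $j\in E$. Hence $y\in S_T(E\setminus E_1)$ iff that unique $j$ lies outside $E_1$, iff $y\notin S_T(E_1)$.
\end{enumerate}

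\textbf{Composites.} First, $T_{S_T}(j)=S_T(\{j\})=T(j)$ directly from the definition. Second, $S_{T_S}(E_1)=\bigcup_{j\in E_1}S(\{j\})=S(E_1)$, again by finite induction on condition~(\ref{Eunion}), with $E_1=\varnothing$ handled by condition~(\ref{Eempty}). So the two assignments are mutually inverse bijections.

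The only step needing care is the disjointness in the forward direction. It must be derived from the axioms, either via Remark~\ref{RMKsetransf} or directly from conditions~(\ref{Eunion}) and~(\ref{Eminus}), rather than assumed. Everything else is routine unwinding of the definitions.
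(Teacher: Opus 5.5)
Your proof is correct and takes essentially the same route as the paper: you obtain nonemptiness, disjointness (your second argument via conditions (2) and (3) is exactly the paper's) and covering from the axioms together with $S(E)=F$, check that $S_T$ satisfies Definition~\ref{SETtransf}, and verify that both composites are identities. The only difference is that you spell out details the paper leaves implicit, namely the backward direction and the degenerate case $E=\varnothing$.
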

\begin{proof}
By Condition \eqref{Eunion}, any $E_1\subseteq E$ satisfies
\[
S(E_1)=S\Big(\bigcup_{j\in E_1}\{j\}\Big)
=\bigcup_{j\in E_1}S(\{j\}).
\]
Condition \eqref{Eempty} makes each $S(\{j\})$ nonempty, and
Condition \eqref{Eminus} implies that these sets are pairwise
disjoint. By Remark \ref{RMKsetransf}, their union is $F$.
Thus $(E,F,T_S)$ is a point-set transformation. Conversely, the
three conditions in Definition \ref{ptStTR} imply that $S_T$
satisfies Definition \ref{SETtransf}. Finally,
$T_{S_T}(j)=T(j)$ for every $j\in E$, and
$S_{T_S}(E_1)=S(E_1)$ for every $E_1\subseteq E$.
\end{proof}

To write the restricted isometry associated with a set transformation,
we introduce two pieces of notation. First, for each $k\in F$ we
consider the collection of sets in the image of $S$ that contain $k$.

\begin{definition}\label{defiESk}
Let $(E,F,S)\colon X_d\to X_n$ be a set transformation and $k\in F$.
We define
\[
\cF_{S,k}=\{K\in S(\cP(E))\colon k\in K\},\qquad
E_{S,k}=\bigcap_{K\in\cF_{S,k}}S^{-1}(K).
\]
\end{definition}

\begin{definition}\label{defiS*}
Let $(E,F,S)\colon X_d\to X_n$ be a set transformation. We define
the linear map $S_*\colon\C^d\to\C^n$ by letting
\[
S_*(\delta_j)=
\begin{cases}
\displaystyle\sum_{k\in S(\{j\})}\delta_k&\text{if }j\in E,\\
\vf{0}_n&\text{if }j\in X_d\setminus E,
\end{cases}
\qquad j\in X_d.
\]
\end{definition}

To fix the concepts, we look at the following concrete example that will 
be revisited in Section \ref{sec:applications}:

\begin{example}\label{Example:E_{S,k}&S_*}
Let $E=\{1,2,3\}\subseteq X_4$, $F=\{1,2,3,4\}\subseteq X_5$, and
let $(E,F,S)$ be the set transformation determined by
\[
S(\{1\})=\{1,3\},\quad S(\{2\})=\{4\},\quad S(\{3\})=\{2\}.
\]
By Proposition \ref{pointmap} this determines $S$ on all subsets
of $E$. For instance,
\[
E_{S,1}=E_{S,3}=\{1\},\qquad E_{S,2}=\{3\},\qquad E_{S,4}=\{2\}.
\]
Since $S_*(\delta_1)=\delta_1+\delta_3$,
$S_*(\delta_2)=\delta_4$, $S_*(\delta_3)=\delta_2$ and
$S_*(\delta_4)=\vf{0}_5$, we obtain
\[
S_*=\begin{bmatrix}
1&0&0&0\\
0&0&1&0\\
1&0&0&0\\
0&1&0&0\\
0&0&0&0
\end{bmatrix}.
\]
Observe that $S_*\notin\op{RI}(\ell_4^p,\ell_5^p)$, but it
becomes a restricted isometry when both nonzero entries in the first
column are multiplied by $2^{-1/p}$. More generally, they may be
multiplied by any two positive constants $c_1,c_2$ satisfying
$c_1^p+c_2^p=1$. The role of the constants $c_1, c_2$ will 
be that of the amplitude  function in our main Theorem \ref{thm:classification}.
\end{example}

We now prove some basic properties 
about the objects defined on 
Definitions \ref{defiESk} and \ref{defiS*}. 

\begin{lemma}\label{RMKsinglet}
Let $d,n\in\Z_{\geq1}$, let $(E,F,S)\colon X_d\to X_n$
be a set transformation and let $k\in F$. Then
\begin{enumerate}
\item\label{cardEsk=1} $\op{card}(E_{S,k})=1$;
\item\label{esk=esl} $E_{S,l}=E_{S,k}$ for all $l\in S(E_{S,k})$;
\item\label{S*andESk} $S_*(\xi)(l)=\xi(E_{S,k})$ for all
$l\in S(E_{S,k})$ and $\xi\in\ell_d^p$.
\end{enumerate}
Here we use the natural identification $\xi(\{j\})=\xi(j)$ for a
singleton $\{j\}$.
\end{lemma}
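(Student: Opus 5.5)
The plan is to reduce everything to the point-set transformation $T_S$ of Proposition \ref{pointmap}. Fix $k\in F$. Since the sets $S(\{j\})$, $j\in E$, are pairwise disjoint with union $F$, there is a unique $j_k\in E$ with $k\in S(\{j_k\})$. I will show that $E_{S,k}=\{j_k\}$, which gives (1). From this identification, (2) and (3) follow quickly.

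For (1), first note that $S(\{j_k\})\in\cF_{S,k}$. By injectivity of $S$ (Remark \ref{RMKsetransf}), $S^{-1}(S(\{j_k\}))=\{j_k\}$, so $E_{S,k}\subseteq\{j_k\}$. Conversely, let $K\in\cF_{S,k}$, say $K=S(E_1)$ with $E_1\subseteq E$; the preimage $E_1$ is unique by injectivity, so $S^{-1}(K)$ is a well-defined subset. By Proposition \ref{pointmap}, $S(E_1)=\bigcup_{j\in E_1}S(\{j\})$. Since $k$ lies in this union and the $S(\{j\})$ are disjoint, with $k$ lying only in $S(\{j_k\})$, we get $j_k\in E_1$. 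Hence $j_k$ lies in every $S^{-1}(K)$, and therefore $E_{S,k}=\{j_k\}$.

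For (2), take $l\in S(E_{S,k})=S(\{j_k\})$. Then $j_l$, the unique index whose image contains $l$, equals $j_k$ by disjointness. Applying (1) to $l$ gives $E_{S,l}=\{j_l\}=\{j_k\}=E_{S,k}$. For (3), expand $\xi=\sum_j\xi(j)\delta_j$ and use Definition \ref{defiS*}:
\[
S_*(\xi)(l)=\sum_{j\in E}\xi(j)\sum_{m\in S(\{j\})}\delta_m(l).
\]
For $l\in S(\{j_k\})$, disjointness kills every term except $j=j_k$, which contributes $\xi(j_k)=\xi(E_{S,k})$.

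The only delicate point is interpreting $S^{-1}(K)$ in Definition \ref{defiESk} as the unique preimage set, and we must rely on injectivity of $S$ to make sense of it. Everything else is bookkeeping with the disjoint decomposition of $F$.
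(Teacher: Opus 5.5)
Your proposal is correct and follows essentially the same route as the paper: you use the partition $\{S(\{j\})\}_{j\in E}$ of $F$ from Proposition~\ref{pointmap} to show $E_{S,k}=\{j_k\}$, then obtain (2) and (3) from disjointness and linearity. Your explicit use of the injectivity of $S$, both to make sense of $S^{-1}(K)$ and to get $E_{S,k}\subseteq\{j_k\}$, spells out details the paper leaves implicit.
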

\begin{proof}
By Proposition \ref{pointmap}, the sets $S(\{j\})$ with $j\in E$
form a partition of $F$. Thus there is a unique $j\in E$ such
that $k\in S(\{j\})$. Every set in $\cF_{S,k}$ has preimage
containing $j$, and $S(\{j\})\in\cF_{S,k}$, so
$E_{S,k}=\{j\}$. This proves (1) and (2). Finally, by linearity
and Definition \ref{defiS*}, for $l\in S(\{j\})$ we have
\[
S_*(\xi)(l)
=\sum_{i\in E}\xi(i)S_*(\delta_i)(l)=\xi(j)
=\xi(E_{S,k}),
\]
which proves (3).
\end{proof}

For counting purposes and to simplify some proofs, it is useful to group together all the rows
coming from the same singleton in $E$.

\begin{lemma}\label{indexingF}
Let $d,n\in\Z_{\geq1}$ and $(E,F,S)\colon X_d\to X_n$
be a set transformation. For every $j\in E$, choose $k_j\in S(\{j\})$ and let
$I_S=\{k_j\colon j\in E\}$. Then the sets
$\{E_{S,k}\colon k\in I_S\}$ partition $E$ and the sets
$\{S(E_{S,k})\colon k\in I_S\}$ partition $F$.
\end{lemma}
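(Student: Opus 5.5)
The plan is to reduce everything to the explicit description of $E_{S,k}$ obtained in the proof of Lemma~\ref{RMKsinglet}: for each $k\in F$, $E_{S,k}=\{j\}$ where $j\in E$ is the unique index with $k\in S(\{j\})$. Applied to the chosen representatives $k_j\in S(\{j\})$, this gives $E_{S,k_j}=\{j\}$ for every $j\in E$, and hence $S(E_{S,k_j})=S(\{j\})$.

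First we check that the map $j\mapsto k_j$ from $E$ onto $I_S$ is a bijection. Suppose $k_i=k_j$. Then this element lies in $S(\{i\})\cap S(\{j\})$. By Proposition~\ref{pointmap}, the sets $S(\{j\})$, $j\in E$, are pairwise disjoint, so $i=j$. Consequently
\[
\{E_{S,k}\colon k\in I_S\}=\big\{\{j\}\colon j\in E\big\},
\]
and the family of singletons of $E$ is a partition of $E$. This is the first claim; note that each $j\in E$ is hit exactly once because the indexing is bijective.

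For the second claim, the bijection above gives
\[
\{S(E_{S,k})\colon k\in I_S\}=\{S(\{j\})\colon j\in E\}.
\]
By Proposition~\ref{pointmap}, $(E,F,T_S)$ is a point-set transformation, so these sets have three properties. They are nonempty by condition~(1) of Definition~\ref{ptStTR}, pairwise disjoint by condition~(2), and their union is $F$ by condition~(3). Together these say exactly that they partition $F$.

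The only point requiring care is the injectivity of $j\mapsto k_j$. It ensures that no block is repeated or lost when indexing by $I_S$, and it follows directly from disjointness. If $E=\varnothing$, then $F=S(\varnothing)=\varnothing$ and $I_S=\varnothing$, so both families are empty partitions of empty sets and the statement holds trivially.
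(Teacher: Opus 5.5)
Your proof is correct and follows the paper's argument. You use Lemma~\ref{RMKsinglet} to get $E_{S,k_j}=\{j\}$ and $S(E_{S,k_j})=S(\{j\})$, then invoke Proposition~\ref{pointmap} for the partition properties, adding only an explicit injectivity check for $j\mapsto k_j$ and the trivial case $E=\varnothing$, which the paper leaves implicit.
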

\begin{proof}
Lemma \ref{RMKsinglet} shows that $E_{S,k_j}=\{j\}$ and that
$S(E_{S,k_j})=S(\{j\})$ for every $j\in E$. Both assertions
follow from Proposition \ref{pointmap}.
\end{proof}

For each set transformation we will use two functions to determine the
nonzero entries of the corresponding operator. The \emph{amplitude
function} $\vf{c}$ gives their positive moduli, while the \emph{phase
function} $g$ gives their unimodular factors.

\begin{definition}\label{p-circle_partitions}
Let $d,n\in\Z_{\geq1}$, $p\in[1,\infty)$. 
For each $5$-tuple $(E,F,S,\vf{c},g)$
where
$(E,F,S)\colon X_d\to X_n$ is a set transformation, 
$\vf{c}\colon F\to\R_{>0}$ is a map (\emph{amplitude function}) satisfying 
\[
\|\vf{c}|_{S(E_{S,k})}\|_p=1,\qquad k\in F,
\]
and $g\colon F\to\T$ is a map (\emph{phase
function}) we define $s(E,F,S,\vf{c},g)\colon\ell_d^p\to\ell_n^p$ by
\begin{equation}\label{realSPI}
\big(s(E,F,S,\vf{c},g)\xi\big)(k)=
\begin{cases}
\vf{c}(k)S_*(\xi)(k)g(k)&\text{if }k\in F,\\
0&\text{if }k\in X_n\setminus F,
\end{cases}
\qquad k\in X_n.
\end{equation}
\end{definition}

We first show that the construction above always gives a restricted
isometry, even when $p=2$.

\begin{proposition}\label{prop:constructed}
Let $d,n\in\Z_{\geq1}$, $p\in[1,\infty)$, and let
$(E,F,S,\vf{c},g)$ be as in Definition \ref{p-circle_partitions}.
Then $s(E,F,S,\vf{c},g)\in\op{RI}(\ell_d^p,\ell_n^p)$.
\end{proposition}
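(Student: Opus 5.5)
We will show directly that $s=s(E,F,S,\vf{c},g)$ satisfies $\|s(\xi)\|_p=\|\xi|_E\|_p$ for every $\xi\in\ell_d^p$, with the same $E$ from the set transformation. Linearity of $s$ is clear from \eqref{realSPI}, since $S_*$ is linear and the factors $\vf{c}(k)g(k)$ do not depend on $\xi$.

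The first step is to organize the coordinates of $F$ according to Lemma \ref{indexingF}. Choose $k_j\in S(\{j\})$ for each $j\in E$. Then the sets $S(E_{S,k_j})=S(\{j\})$, for $j\in E$, partition $F$. Since $s(\xi)$ vanishes off $F$, we can write
\[
\|s(\xi)\|_p^p=\sum_{k\in F}\vf{c}(k)^p\,|S_*(\xi)(k)|^p\,|g(k)|^p
=\sum_{j\in E}\ \sum_{l\in S(\{j\})}\vf{c}(l)^p\,|S_*(\xi)(l)|^p .
\]
Here we used $|g(k)|=1$.

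Next, apply Lemma \ref{RMKsinglet}\eqref{S*andESk}. For $l\in S(\{j\})=S(E_{S,k_j})$ it gives $S_*(\xi)(l)=\xi(j)$, so the inner sum becomes $|\xi(j)|^p\sum_{l\in S(\{j\})}\vf{c}(l)^p=|\xi(j)|^p\,\|\vf{c}|_{S(E_{S,k_j})}\|_p^p$. The normalization condition in Definition \ref{p-circle_partitions} makes this factor equal to $1$. Summing over $j\in E$ yields $\sum_{j\in E}|\xi(j)|^p=\|\xi|_E\|_p^p$, as required.

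The argument is essentially bookkeeping, and no case distinction is needed. The cases $E=\varnothing$ (where $F=\varnothing$ and $s=0$) and $p=2$ are covered automatically. The one point to check carefully is that the normalization is stated for every $k\in F$ but is only used once per block. This is consistent: by Lemma \ref{RMKsinglet}\eqref{esk=esl}, $E_{S,k}$ is constant on each block $S(\{j\})$, so the condition is the same for every $k$ in the block. Reducing the sum over $F$ to a sum over blocks via the partition is the only real step.
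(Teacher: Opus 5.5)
Your proposal is correct and follows essentially the same route as the paper: partition $F$ into the blocks $S(\{j\})=S(E_{S,k_j})$ via Lemma~\ref{indexingF}, use Lemma~\ref{RMKsinglet}\eqref{S*andESk} to factor $|\xi(j)|^p$ out of each block, and then apply the normalization $\|\vf{c}|_{S(E_{S,k})}\|_p=1$ to obtain $\|\xi|_E\|_p^p$. Your remarks on the case $E=\varnothing$ and on why the normalization is consistent within each block are accurate; the paper leaves both implicit.
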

\begin{proof}
Let $I_S$ be as in the statement of Lemma \ref{indexingF}. 
Then, the sets $S(E_{S,k})$, for $k\in I_S$,
partition $F$ and the singleton sets $E_{S,k}$, for $k\in I_S$,
partition $E$. By Lemma \ref{RMKsinglet}, we have
$S_*(\xi)(l)=\xi(E_{S,k})$ for $l\in S(E_{S,k})$. Combining
the previous two sentences, and using that $\|\vf{c}|_{S(E_{S,k})}\|_p=1$,  gives 
\begin{align*}
\|s(E,F,S,\vf{c},g)\xi\|_p^p
&=\sum_{k\in I_S}\sum_{l\in S(E_{S,k})}
 \vf{c}(l)^p|S_*(\xi)(l)|^p\\
&=\sum_{k\in I_S}|\xi(E_{S,k})|^p
  \sum_{l\in S(E_{S,k})}\vf{c}(l)^p\\
&=\sum_{k\in I_S}|\xi(E_{S,k})|^p\\
&=\|\xi|_E\|_p^p,
\end{align*}
as wanted. 
\end{proof}

For the converse we will use the fact that two vectors with disjoint
supports can be detected by a norm identity. This is the familiar
scalar disjointness criterion from the proof of Banach--Lamperti;
see \cite[Lemma 2.11]{Gardella21}. We add a short proof for completeness.

\begin{lemma}\label{lem:disjoint}
Let $n\in\Z_{\geq1}$ and $p\in[1,\infty)\setminus\{2\}$.
For $\xi,\eta\in\ell_n^p$, the following are equivalent:
\begin{enumerate}
\item $\supp(\xi)\cap\supp(\eta)=\varnothing$;
\item $\|\xi+\eta\|_p^p+\|\xi-\eta\|_p^p
=2\|\xi\|_p^p+2\|\eta\|_p^p$.
\end{enumerate}
\end{lemma}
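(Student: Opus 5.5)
The plan is to reduce everything to a single scalar inequality, applied coordinatewise. For $k\in X_n$ put $a=\xi(k)$ and $b=\eta(k)$. Summing over $k$ shows that (2) is equivalent to
\[
\sum_{k\in X_n}\Big(|a+b|^p+|a-b|^p-2|a|^p-2|b|^p\Big)=0 .
\]
So the whole argument rests on understanding the function
\[
\phi(a,b)=|a+b|^p+|a-b|^p-2|a|^p-2|b|^p,\qquad a,b\in\C .
\]

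For (1)$\Rightarrow$(2), note that if $ab=0$ then $\phi(a,b)=0$ trivially. Under (1) every coordinate has $\xi(k)\eta(k)=0$, so every term vanishes and (2) follows.

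For (2)$\Rightarrow$(1), I would prove two facts about $\phi$ when $p\neq 2$:
\begin{itemize}
\item $\phi$ has a constant sign: $\phi\ge 0$ for $p>2$ and $\phi\le 0$ for $1\le p<2$;
\item $\phi(a,b)=0$ forces $ab=0$.
\end{itemize}
Given these, the sum in (2) is a sum of terms of one sign adding to zero. Hence every term is zero, so $\xi(k)\eta(k)=0$ for all $k$, which is exactly disjointness of supports.

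To prove the two facts, I would normalize first. Assume $ab\neq 0$. By homogeneity of degree $p$ and the symmetry $a\leftrightarrow b$, we may take $|a|\ge|b|$ and divide by $|a|^p$. Multiplying by a unimodular scalar lets us set $a=1$ and $b=z=re^{i\theta}$ with $0<r\le 1$. We must then show that
\[
h(z)=|1+z|^p+|1-z|^p-2-2r^p
\]
is strictly positive for $p>2$ and strictly negative for $p<2$.

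I would average over the phase $\theta$ and use a convexity argument:
\begin{itemize}
\item $|1\pm z|^2=1+r^2\pm 2r\cos\theta$. With $t=r\cos\theta\in[-r,r]$, the expression $(1+r^2+2t)^{p/2}+(1+r^2-2t)^{p/2}$ is even in $t$.
\item For $p>2$ the map $x\mapsto x^{p/2}$ is convex, so this expression is minimized at $t=0$ and $h\ge 2(1+r^2)^{p/2}-2-2r^p$.
\item Because $0<r^2\le 1$ and $p/2>1$, strict superadditivity $(1+u)^q>1+u^q$ for $q>1$, $u>0$ makes this bound strictly positive.
\item For $p<2$, $x\mapsto x^{p/2}$ is concave. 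Its maximum is at $t=0$, so $h\le 2(1+r^2)^{p/2}-2-2r^p<0$ by strict subadditivity $(1+u)^q<1+u^q$ for $0<q<1$.
\item The case $p=1$ is included here, since $q=1/2$.
\end{itemize}

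The main obstacle I expect is making the strictness airtight in the boundary cases. One is $t$ at the endpoints $\pm r$ with $r=1$, where $1-z=0$. The other is the concave case, where the maximum claim needs the expression to be defined on all of $[-r,r]$; it is, since $1+r^2-2r\ge 0$.

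I would handle these by checking directly that the evenness-plus-convexity argument only needs $t\mapsto$ the expression on $[-r,r]$, where both arguments stay nonnegative. With that settled, the bound at $t=0$ is global, and strict super- or subadditivity supplies the strict sign.

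An alternative fallback is to cite Clarkson-type inequalities \cite[Lemma~3.2.1]{Flem03} together with their equality cases. The direct computation above keeps the proof self-contained.
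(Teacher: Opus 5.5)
Your proof is correct and has the same structure as the paper's. You reduce (2) to the vanishing of $\sum_k \phi(\xi(k),\eta(k))$, use that $\phi$ has a fixed sign for $p\neq 2$ and vanishes only when $ab=0$, and conclude termwise.

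The only difference is in the scalar facts. The paper cites \cite[Lemma~3.2.1]{Flem03} for them. You prove them directly, using the midpoint inequality for the convex (resp.\ concave) map $x\mapsto x^{p/2}$ and strict super- (resp.\ sub-)additivity of $x\mapsto x^{q}$. That argument is valid.

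The boundary cases you flag are harmless. The midpoint inequality only needs $1+r^2\pm 2t\ge 0$, and strictness comes entirely from the additivity step, independently of $t$.
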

\begin{proof}
For any $a,b\in\C$, the scalar expression
$|a+b|^p+|a-b|^p-2|a|^p-2|b|^p$ has a fixed sign, depending
on whether $p>2$ or $p<2$, and it vanishes for $p\neq2$
precisely when $a=0$ or $b=0$ (\cite[Lemma 3.2.1.]{Flem03}).
Apply this to each coordinate and add the resulting equalities.
\end{proof}

We can now prove our main result. The set transformation is determined
by the supports of the columns, while $\vf{c}$ and $g$ are determined
by their nonzero entries.

\begin{theorem}\label{thm:classification}
Let $d,n\in\Z_{\geq1}$, let $p\in[1,\infty)\setminus\{2\}$,
and let $s\in\op{RI}(\ell_d^p,\ell_n^p)$. Then there is a unique
set transformation $(E,F,S)\colon X_d\to X_n$ and unique functions
$\vf{c}\colon F\to\R_{>0}$ and $g\colon F\to\T$ satisfying
\begin{equation}\label{eq:weight_1}
\|\vf{c}|_{S(E_{S,k})}\|_p=1,\qquad k\in F,
\end{equation}
such that $s=s(E,F,S,\vf{c},g)$ as in Equation \eqref{realSPI}.
For $s=0$, these data are $E=F=\varnothing$, the unique set
transformation $S\colon\{\varnothing\}\to\{\varnothing\}$,
and the empty functions $\vf{c}$ and $g$.
\end{theorem}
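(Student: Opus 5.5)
The plan is to read the data off the columns of the matrix of $s$. Let $E$ be the set from Lemma~\ref{lem:initial}, so $E=\{j\in X_d\colon s(\delta_j)\neq0\}$ and $\|s(\delta_j)\|_p=1$ for $j\in E$. For $j\in E$ put $T(j)=\supp(s(\delta_j))\subseteq X_n$, and set $F=\bigcup_{j\in E}T(j)$.

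The key step is to show that the sets $T(j)$ are pairwise disjoint, and this is the only place where $p\neq2$ enters. For distinct $i,j\in E$ we use the defining property of a restricted isometry:
\[
\|s(\delta_i)\pm s(\delta_j)\|_p^p=\|(\delta_i\pm\delta_j)|_E\|_p^p=2 .
\]
Hence
\[
\|s(\delta_i)+s(\delta_j)\|_p^p+\|s(\delta_i)-s(\delta_j)\|_p^p=4=2\|s(\delta_i)\|_p^p+2\|s(\delta_j)\|_p^p .
\]
Lemma~\ref{lem:disjoint} then gives $T(i)\cap T(j)=\varnothing$. Each $T(j)$ is nonempty, so $(E,F,T)$ is a point-set transformation. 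Proposition~\ref{pointmap} turns it into a set transformation $(E,F,S)$ with $S(\{j\})=T(j)$.

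Next we define the amplitude and phase functions. For $k\in F$, Lemma~\ref{RMKsinglet} gives a unique $j\in E$ with $E_{S,k}=\{j\}$. Let $z=s(\delta_j)(k)$, which is nonzero, and put $\vf{c}(k)=|z|$ and $g(k)=z/|z|$. Then
\[
\|\vf{c}|_{S(E_{S,k})}\|_p=\|s(\delta_j)\|_p=1,
\]
which is \eqref{eq:weight_1}.

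To check $s=s(E,F,S,\vf{c},g)$, it suffices by linearity to compare the two operators on each $\delta_j$.
\begin{enumerate}
\item If $j\notin E$, both sides are $0$.
\item If $j\in E$ and $k\in S(\{j\})$, we have $S_*(\delta_j)(k)=1$, so the right side equals $\vf{c}(k)g(k)=s(\delta_j)(k)$.
\item If $j\in E$ and $k\notin S(\{j\})$, both sides vanish, since $T(j)$ is the support of $s(\delta_j)$.
\end{enumerate}

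Uniqueness is the remaining step. Suppose $s=s(E',F',S',\vf{c}',g')$. By Proposition~\ref{prop:constructed} and Lemma~\ref{lem:initial}, $E'$ is the set of nonzero columns, so $E'=E$. For $j\in E$, the column $s(\delta_j)$ has entries $\vf{c}'(k)g'(k)$ on $S'(\{j\})$ and $0$ elsewhere. Since $\vf{c}'>0$ and $g'$ is unimodular, the support of this column is $S'(\{j\})$, so $S'(\{j\})=T(j)$. Proposition~\ref{pointmap} then gives $S'=S$, and $F'=S'(E)=F$. Finally, polar decomposition of the nonzero complex entries is unique, which forces $\vf{c}'=\vf{c}$ and $g'=g$.

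When $s=0$ we get $E=\varnothing$, hence $F=\varnothing$, and everything degenerates to the empty data stated in the theorem. The disjointness step is the only real obstacle; the rest is bookkeeping with Proposition~\ref{pointmap} and Lemma~\ref{RMKsinglet}.
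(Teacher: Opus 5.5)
Your proposal is correct and follows essentially the same route as the paper. The column supports $\supp(s(\delta_j))$, $j\in E$, form a point-set transformation, with disjointness from Lemma~\ref{lem:disjoint} applied to $s(\delta_i)\pm s(\delta_j)$; Proposition~\ref{pointmap} turns this into $(E,F,S)$; $\vf{c}$ and $g$ are the moduli and phases of the nonzero entries; and uniqueness comes from reading off the columns. Your uniqueness step, which uses Proposition~\ref{prop:constructed} and Lemma~\ref{lem:initial} to get $E'=E$ and then matches supports and polar decompositions, simply spells out what the paper states more briefly.
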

\begin{proof}
Since $s$ is a restricted isometry, there is $E\subseteq X_d$ such
that $\|s(\xi)\|_p=\|\xi|_E\|_p$ for all $\xi$; by Lemma
\ref{lem:initial}, this is the unique possible $E$. We define
\[
F=\bigcup_{j\in E}\supp(s(\delta_j))
\]
and $T\colon E\to\cP(F)$ by
\[
T(j)=\supp(s(\delta_j)),\qquad j\in E.
\]
We now check that $(E,F,T)$ is a point-set transformation as in 
Definition \ref{ptStTR}. Indeed, for $j\in E$, since  $\|s(\delta_j)\|_p = \|\delta_j|_E\|_p = 1$, 
we have $T(j) = \supp (s(\delta_j)) \neq \varnothing$. Next, 
for $j, k \in E$ with $j\neq k$,  we observe 
using that $s \in \op{RI}(\ell_d^p, \ell_n^p)$ at the first and third steps, and using Lemma \ref{lem:disjoint} at the second step, 
that 
    \begin{align*}
        \|s(\delta_k) + s(\delta_j)\|_p^p + \|s(\delta_k) - s(\delta_j)\|_p^p &= \|\delta_k|_E + \delta_j|_E\|_p^p + \|\delta_k|_E - \delta_j|_E\|_p^p\\
        &= 2\|\delta_k|_E\|_p^p + 2\|\delta_j|_E\|_p^p \\
        &= 2\|s(\delta_k)\|_p^p + 2\|s(\delta_j)\|_p^p.
    \end{align*}
Thus, Lemma \ref{lem:disjoint} gives $T(j) \cap T(k) = \supp (s(\delta_j)) \cap \supp (s(\delta_k)) = \varnothing$.
Finally, by definition of $F$, the union of $(\{T(j)\})_{ j \in E}$ is $F$, and hence $(E,F,T)$
is a point-set transformation. Proposition \ref{pointmap} gives the
unique corresponding set transformation $(E,F,S)$.
 
 Next, observe that Lemma \ref{RMKsinglet} guarantees the existence of 
 a function $\iota \colon F \to E$ such that, for every $k \in F$, 
 $\iota(k) \in E$ is such that $E_{S,k}=\{\iota(k)\}$. 
 Furthermore, Part \eqref{esk=esl} in Lemma \ref{RMKsinglet}
 gives that, for each $k \in F$, if  $l \in S(E_{S,k})$, then 
\begin{equation}\label{eqn:iotakl}
\iota(k) = \iota(l).
\end{equation}
We then define
$\vf{c}\colon F\to\R_{>0}$ and $g\colon F\to\T$ by
\[
\vf{c}(k)=|s(\delta_{\iota(k)})(k)|,
\qquad
 g(k)=\frac{s(\delta_{\iota(k)})(k)}{|s(\delta_{\iota(k)})(k)|},
\qquad k\in F.
\]
These functions are well defined because $k\in\supp(s(\delta_{\iota(k)}))$.
Hence, for each $k \in F$, using the equality \eqref{eqn:iotakl}
at the second step, that $\supp(s(\delta_{\iota(k)}))=T(\iota(k)) = S(E_{S,k})$ in the third step,
and that $s \in \op{RI}(\ell_d^p,\ell_n^p)$ in the fourth step, 
we obtain
\[
\|\vf{c}|_{S(E_{S,k})}\|_p^p = 
\sum_{l \in S(E_{S,k})} |s(\delta_{\iota(l)})(l)|^p 
= \sum_{l \in S(E_{S,k})} |s(\delta_{\iota(k)})(l)|^p
= \| s(\delta_{\iota(k)}) \|_p^p 
= 1.
\]
By linearity, $s(\xi)(k)=\vf{c}(k)g(k)S_*(\xi)(k)$ for
$k\in F$. The columns indexed by $X_d\setminus E$ are zero,
and every column is zero outside $F$, so $s(\xi)(k)=0$
 for $k\in X_n\setminus F$. Therefore $s=s(E,F,S,\vf{c},g)$.

Finally, the set $E$ is determined by Lemma \ref{lem:initial}, the
sets $S(\{j\})$ are the supports of $s(\delta_j)$, and the values
of $\vf{c}$ and $g$ are determined by the nonzero entries of these
columns. This proves uniqueness as well.
\end{proof}

\begin{remark}\label{rem:hilbert}
The restriction $p\neq2$ is necessary for the converse. For example,
\[
s=\frac{1}{\sqrt 2}\begin{bmatrix}1&1\\1&-1\end{bmatrix}
\]
is an isometry of $\ell_2^2$ whose two columns do not have
disjoint supports.
\end{remark}

\begin{remark}\label{rem:infty}
The restriction $p<\infty$ is also necessary. By the
Banach--Stone theorem \cite{Stone37}, every surjective
isometry of $\ell_d^\infty$ is given by a permutation
of coordinates followed by multiplication by unimodular
scalars. However, the conclusion of
Theorem~\ref{thm:classification} does not hold for
restricted isometries when $p=\infty$.

Indeed, consider $s\colon\ell_3^\infty\to\ell_3^\infty$
given by
\[
s=\begin{bmatrix}
1 & 0 & 0\\
0 & 1 & 0\\
\frac12 & \frac12 & 0
\end{bmatrix}.
\]
For every $\xi\in\ell_3^\infty$, we have for $E=\{1,2\}$,
\[
\|s(\xi)\|_\infty
=\max\{|\xi(1)|,|\xi(2)|\}
=\|\xi|_E\|_\infty,
\]
Thus, $s$ satisfies the restricted-isometry
norm identity for $p=\infty$, but its nonzero
columns do not have disjoint supports.

For a general description of nonsurjective isometries
between spaces of continuous functions, see
Holszty\'nski's extension of the Banach--Stone theorem
\cite{Holsztynski66}.
\end{remark}

The same classification applies when the coordinates carry positive
weights. We obtain the following corollary, allowing different weights
and different dimensions for the domain and codomain.

Let $d,n\in\Z_{\geq1}$, let $p\in[1,\infty)$, and let
$\omega_d\colon X_d\to\R_{>0}$ and
$\omega_n\colon X_n\to\R_{>0}$ be functions. We use the notation
$\ell_d^{p,\omega_d}$ and $\ell_n^{p,\omega_n}$ for the spaces of
complex-valued functions on $X_d$ and $X_n$ equipped with the norms
\[
\|\xi\|_{p,\omega_d}
=\Big(\sum_{j\in X_d}|\xi(j)|^p\omega_d(j)\Big)^{1/p},
\qquad \xi\in\ell_d^{p,\omega_d},
\]
and, similarly,
\[
\|\eta\|_{p,\omega_n}
=\Big(\sum_{k\in X_n}|\eta(k)|^p\omega_n(k)\Big)^{1/p},
\qquad \eta\in\ell_n^{p,\omega_n}.
\]
We say that $s\in\cL(\ell_d^{p,\omega_d},\ell_n^{p,\omega_n})$
is a \emph{weighted restricted isometry} if there is $E\subseteq X_d$
such that
\[
\|s(\xi)\|_{p,\omega_n}=\|\xi|_E\|_{p,\omega_d},
\qquad \xi\in\ell_d^{p,\omega_d}.
\]

To characterize restricted isometries between weighted spaces,
we need to normalize the amplitude function (see Equation \eqref{eq:weightedformula} below).
This normalization condition reduces to
Equation~\eqref{eq:weight_1} when both weights
are identically one. In general, it involves
the weights of both spaces, ensuring that each
nonzero column has the same weighted $p$-norm
as the corresponding standard basis vector.

\begin{corollary}\label{cor:weighted}
Let $d,n\in\Z_{\geq1}$, $p\in[1,\infty)\setminus\{2\}$, and let
$\omega_d\colon X_d\to\R_{>0}$ and
$\omega_n\colon X_n\to\R_{>0}$ be positive weights. A linear map
$s\colon\ell_d^{p,\omega_d}\to\ell_n^{p,\omega_n}$ is a weighted
restricted isometry if and only if there are a set transformation
$(E,F,S)\colon X_d\to X_n$, an amplitude function
$\vf{c}\colon F\to\R_{>0}$, and a phase function $g\colon F\to\T$
such that
\begin{equation}\label{eq:weightednorm}
\sum_{k\in S(\{j\})}\omega_n(k)\vf{c}(k)^p
=\omega_d(j),\qquad j\in E,
\end{equation}
and
\begin{equation}\label{eq:weightedformula}
(s(\xi))(k)=
\begin{cases}
\vf{c}(k)S_*(\xi)(k)g(k)&\text{if }k\in F,\\
0&\text{if }k\in X_n\setminus F,
\end{cases}
\qquad k\in X_n,\ \xi\in\ell_d^{p,\omega_d}.
\end{equation}
These data are unique. In particular, the zero map corresponds to
$E=F=\varnothing$, the unique transformation
$S\colon\{\varnothing\}\to\{\varnothing\}$, and the empty amplitude
and phase functions.
\end{corollary}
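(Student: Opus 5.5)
The plan is to reduce to the unweighted Theorem~\ref{thm:classification} by conjugating with diagonal isometries. For a weight $\omega$ on $X_m$, define $D_\omega\colon\ell_m^{p,\omega}\to\ell_m^p$ by $(D_\omega\xi)(j)=\omega(j)^{1/p}\xi(j)$. This is a surjective linear isometry. Since it acts coordinatewise, it commutes with restriction: $D_\omega(\xi|_E)=(D_\omega\xi)|_E$. Given a linear map $s\colon\ell_d^{p,\omega_d}\to\ell_n^{p,\omega_n}$, set
\[
t=D_{\omega_n}\, s\, D_{\omega_d}^{-1}\colon\ell_d^p\to\ell_n^p.
\]
Then for every $\eta\in\ell_d^p$ and $\xi=D_{\omega_d}^{-1}\eta$ we have $\|t\eta\|_p=\|s\xi\|_{p,\omega_n}$ and $\|\eta|_E\|_p=\|\xi|_E\|_{p,\omega_d}$. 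Hence $s$ is a weighted restricted isometry with set $E$ if and only if $t\in\op{RI}(\ell_d^p,\ell_n^p)$ with the same $E$.

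The next step is to translate the data. Entrywise, $t_{kj}=\omega_n(k)^{1/p}\,s_{kj}\,\omega_d(j)^{-1/p}$. So $t$ and $s$ have the same zero pattern, and the phases of their nonzero entries agree. Suppose $t=s(E,F,S,\vf{c}',g)$ with $\|\vf{c}'|_{S(\{j\})}\|_p=1$ for $j\in E$; by Lemma~\ref{RMKsinglet}, $S(E_{S,k})=S(\{\iota(k)\})$, so these conditions are the same as \eqref{eq:weight_1}. Then define
\[
\vf{c}(k)=\omega_n(k)^{-1/p}\,\vf{c}'(k)\,\omega_d(\iota(k))^{1/p}.
\]
A direct check shows that $s$ satisfies \eqref{eq:weightedformula} with the same $(E,F,S)$ and $g$. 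The normalization $\sum_{k\in S(\{j\})}\vf{c}'(k)^p=1$ becomes exactly \eqref{eq:weightednorm}, namely $\sum_{k\in S(\{j\})}\omega_n(k)\vf{c}(k)^p=\omega_d(j)$. The substitution is invertible, so data for $s$ satisfying \eqref{eq:weightednorm} correspond bijectively to data for $t$ satisfying \eqref{eq:weight_1}.

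This gives both directions of the equivalence. For the forward direction, apply Theorem~\ref{thm:classification} to $t$ and transfer the data as above. For the converse, transfer the given data to $t$, use Proposition~\ref{prop:constructed} to see that $t$ is a restricted isometry, and conclude that $s$ is a weighted one. Uniqueness also transfers through the bijection. More directly, the set $E$ is the set of nonzero columns of $s$, each $S(\{j\})$ is the support of column $j$, and $\vf{c}$ and $g$ are the moduli and phases of the nonzero entries. The zero-map case is read off from the corresponding case of Theorem~\ref{thm:classification}.

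The main obstacle is bookkeeping rather than analysis. The amplitude of entry $(k,j)$ must be rescaled using the correct domain index $j=\iota(k)$. One must also confirm that the rescaled amplitudes stay strictly positive, which holds because all weights are positive. Finally, the normalization should be written per fiber $S(\{j\})$ rather than via $E_{S,k}$, which is justified by Lemma~\ref{RMKsinglet}\eqref{cardEsk=1}--\eqref{esk=esl}.
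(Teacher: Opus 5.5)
Your proposal is correct and follows the paper's proof: conjugate by the diagonal isometries $\xi\mapsto\omega^{1/p}\xi$, apply Theorem~\ref{thm:classification} to $t$, and rescale the amplitudes on each fiber $S(\{j\})$ by $(\omega_d(j)/\omega_n(k))^{1/p}$. The only difference is in the converse: you transfer the data back to $t$ and invoke Proposition~\ref{prop:constructed}, whereas the paper checks the weighted norm identity directly.
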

\begin{proof}
Define the diagonal maps
$u_1 \colon\ell_d^{p,\omega_d}\to\ell_d^p$ and
$u_2 \colon\ell_n^{p,\omega_n}\to\ell_n^p$ by
\[
(u_1(\xi))(j)=\omega_d(j)^{1/p}\xi(j),
\qquad j\in X_d,
\]
and
\[
(u_2(\eta))(k)=\omega_n(k)^{1/p}\eta(k),
\qquad k\in X_n.
\]
Then, $u_1 \in \op{Isom}(\ell_d^{p,\omega_d}, \ell_d^p)$, 
$u_2 \in \op{Isom}(\ell_n^{p,\omega_n}, \ell_n^p)$, 
and 
\[
t=u_2su_1^{-1}\colon\ell_d^p\to\ell_n^p
\]
is a restricted isometry if and only if $s$ is a weighted restricted
isometry. By Theorem \ref{thm:classification}, this is equivalent to
the existence of a unique set transformation $(E,F,S)$ and unique
functions $\widetilde{\vf{c}}\colon F\to\R_{>0}$ and
$g\colon F\to\T$ with
\[
\sum_{k\in S(\{j\})}\widetilde{\vf{c}}(k)^p=1,
\qquad j\in E,
\]
and $t=s(E,F,S,\widetilde{\vf{c}},g)$.
For each $k\in F$, there is a unique $j\in E$ such that
$k\in S(\{j\})$. Define the amplitude function
$\vf{c}\colon F\to\R_{>0}$ by
\[
\vf{c}(k)=\left(\frac{\omega_d(j)}{\omega_n(k)}\right)^{1/p}
\widetilde{\vf{c}}(k),\qquad k\in S(\{j\}),\ j\in E.
\]
The formula $s=u_2^{-1}tu_1$ gives Equation
\eqref{eq:weightedformula}, and the normalization of
$\widetilde{\vf{c}}$ gives
\[
\sum_{k\in S(\{j\})}\omega_n(k)\vf{c}(k)^p
=\omega_d(j)\sum_{k\in S(\{j\})}
\widetilde{\vf{c}}(k)^p=\omega_d(j),\qquad j\in E.
\]
Conversely, suppose the data in the statement satisfy
Equations \eqref{eq:weightednorm} and \eqref{eq:weightedformula}.
The sets $S(\{j\})$, for $j\in E$, partition $F$, and by
Lemma \ref{RMKsinglet} we have $S_*(\xi)(k)=\xi(j)$ for
$k\in S(\{j\})$. Hence
\[
\|s(\xi)\|_{p,\omega_n}^p =\sum_{j\in E}\sum_{k\in S(\{j\})}
\omega_n(k)\vf{c}(k)^p|\xi(j)|^p=\sum_{j\in E}\omega_d(j)|\xi(j)|^p=\|\xi|_E\|_{p,\omega_d}^p.
\]
The uniqueness follows from Theorem \ref{thm:classification} and
the invertibility of the diagonal maps $u_1$ and $u_2$.
\end{proof}

In particular, it is worth singling out the case $n=d$ and two possibly different weights on domain and codomain. 
Indeed, for this case we let $\omega,\omega' \colon
X_d\to\R_{>0}$ be two independently chosen weights, so that their
notation remains distinct even when the dimensions agree. If
$s\colon\ell_d^{p,\omega}\to\ell_d^{p,\omega'}$ is a surjective
linear isometry, its $d$ nonzero column supports are disjoint and must
all be singletons. Consequently, there is a permutation
$\pi\colon X_d\to X_d$ and a phase function $g\colon X_d\to\T$
such that
\[
(s(\xi))(k)
=\left(\frac{\omega(\pi(k))}{\omega'(k)}\right)^{1/p}
\xi(\pi(k))g(k),\qquad k\in X_d,\ \xi\in\ell_d^{p,\omega}.
\]
Conversely, every such choice defines an isometry. Over $\R$,
there are therefore $2^d d!$ such isometries, since the phase
function can take the values $1$ and $-1$. In particular, positive
weights do not prevent coordinate permutations: the amplitudes
change to account for the different weights.

\begin{example}\label{rem:phillips}
Consider the isometry $s\colon\ell_1^p \to\ell^p(\Z_{\geq1})$ defined by
\[
s(z)(k)=2^{-k/p}z,\qquad k\in\Z_{\geq1}.
\]
Its support transformation sends the single source atom to the entire
countable set, so the image $\sigma$-algebra has only
$\varnothing$ and $\Z_{\geq1}$. Counting measure on this image
$\sigma$-algebra is not $\sigma$-finite, even though counting measure
on $\Z_{\geq1}$ is $\sigma$-finite. Consequently, the conclusion of
\cite[Theorem~6.9]{Phillips12}, with the definition of a semispatial
system used in \cite[Definition~6.3]{Phillips12}, does not cover
this isometry. The issue does not arise for the finite sets used in
this paper.
\end{example}

\begin{example}\label{ex:nonsemispatial}
The finite-dimensional setting gives another limitation
of the semispatial formulation in
\cite[Definitions~6.3 and 6.4]{Phillips12}.
Let $p\in[1,\infty)\setminus\{2\}$, and consider the
isometry $s\colon\ell_1^p \to\ell_2^p$ given by
\[
s=
\begin{bmatrix}
(1/3)^{1/p}\\
(2/3)^{1/p}
\end{bmatrix}.
\]
Indeed,
\[
\|s(z)\|_p^p
=\frac13|z|^p+\frac23|z|^p
=|z|^p.
\]

We claim that $s$ is not a semispatial partial
isometry in the sense of Phillips. To see this,
let $X=X_1$ and $Y=X_2$, both equipped
with counting measure, and identify their
$L^p$-spaces with $\C$ and $\ell_2^p$,
respectively. If $s$ were semispatial, its
domain and range supports would necessarily be
$E=X$ and $F=Y$. The associated measurable set
transformation $S$ would then satisfy
\[
S(\varnothing)=\varnothing,
\qquad S(E)=F,
\]
so that
\[
\op{Ran}(S)=\{\varnothing,F\}.
\]
Writing $\mu$ and $\nu$ for the counting
measures on $X$ and $Y$, respectively, we have
\[
S_*(\mu)(F)=1,
\qquad
\nu(F)=2.
\]
Consequently, the Radon--Nikodym derivative
appearing in \cite[Definition~6.4]{Phillips12}
is
\[
\frac{dS_*(\mu)}{d(\nu|_{\op{Ran}(S)})}
=\frac12
\]
on $F$. Since the phase function $g$ is
unimodular, Phillips's formula would require
\[
s= 2^{-1/p}
\begin{bmatrix}
g(1)\\
g(2)
\end{bmatrix},
\]
which contradicts the different moduli of the
two coefficients defining $s$.

Thus, $s$ is an isometry between finite-dimensional
$\ell^p$-spaces which is not semispatial in the
sense of \cite{Phillips12}. Notice that
$\nu|_{\op{Ran}(S)}$ is finite, so the obstruction
here is unrelated to the $\sigma$-finiteness
issue in Example~\ref{rem:phillips}.
\end{example}

We close this section by comparing our terminology with a different
notion of partial isometry used in Banach algebras. Recall that an
idempotent $q$ in a complex unital Banach algebra $A$ is
\textit{hermitian} if $\|\exp(itq)\|=1$ for every $t\in\R$.
An element $s\in A$ has a \textit{Moore--Penrose inverse} if there
is $b\in A$ such that $sbs=s$, $bsb=b$ and $sb$, $bs$ are
hermitian idempotents; see \cite{MosicDjordjevic11} and the use
of these inverses in \cite{GPT25rigidity}. 

\begin{proposition}\label{prop:mp}
Let $d\in\Z_{\geq1}$ and $p\in[1,\infty)\setminus\{2\}$.
For $s\in\op{RI}(\ell_d^p)$, the following are equivalent:
\begin{enumerate}
\item $s$ is Moore--Penrose invertible in $\cL(\ell_d^p)$;
\item $\op{card}(S(\{j\}))=1$ for every $j\in E$, where
$(E,F,S)$ is the set transformation associated to $s$;
\item $s$ is a matrix with at most one nonzero entry in each
row and column, with all nonzero entries belonging to $\T$.
\end{enumerate}
For $s=0$, these conditions hold with $E=F=\varnothing$.
\end{proposition}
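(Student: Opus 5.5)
I would prove the cycle (2)$\Leftrightarrow$(3), (3)$\Rightarrow$(1) and (1)$\Rightarrow$(2), using Theorem~\ref{thm:classification} to write $s=s(E,F,S,\vf{c},g)$ throughout.

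The equivalence (2)$\Leftrightarrow$(3) is bookkeeping. If every $S(\{j\})$ is a singleton $\{k_j\}$, then the normalization \eqref{eq:weight_1} forces $\vf{c}(k_j)=1$. Hence column $j$ is $g(k_j)\delta_{k_j}$ for $j\in E$, and zero otherwise. Since the sets $S(\{j\})$ are pairwise disjoint, no row contains two nonzero entries, which gives (3). Conversely, (3) says each nonzero column has exactly one nonzero entry, so each $S(\{j\})=\supp(s(\delta_j))$ is a singleton.

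For (3)$\Rightarrow$(1), I would write $s=ue$. Here $u$ is a unitary-type isometry, namely a permutation matrix times a diagonal matrix with entries in $\T$, extending the partial matching $j\mapsto k_j$ to a bijection of $X_d$. The factor $e$ is the diagonal projection onto $\ell^p(E)$. Then $b=e u^{-1}$ satisfies $sbs=s$ and $bsb=b$. Moreover $bs=e$ and $sb=ueu^{-1}$. Both are diagonal $0/1$ projections, up to conjugation by the surjective isometry $u$. A diagonal projection $q$ has $\exp(itq)$ diagonal with unimodular entries, so $\|\exp(itq)\|=1$. Conjugating by the isometry $u$ preserves this, so both idempotents are hermitian.

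The main obstacle is (1)$\Rightarrow$(2). I plan to use the known fact that hermitian idempotents in $\cL(\ell_d^p)$ for $p\neq2$ are exactly the diagonal $0/1$ projections; see \cite{MosicDjordjevic11} or the Lamperti-based arguments in \cite{GPT25rigidity}. The idea behind this fact is that $\exp(itq)$ is an invertible isometry, so by Banach--Lamperti it is a $\T$-weighted permutation for all $t$. Continuity at $t=0$ forces the permutation to be trivial, so $q$ is diagonal. Given this, $sb=q$ is a diagonal projection with $\op{Ran}(q)=\op{Ran}(s)$, since $sbs=s$ gives $\op{Ran}(s)\subseteq\op{Ran}(sb)\subseteq\op{Ran}(s)$. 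Thus $\op{Ran}(s)$ is a coordinate subspace $\ell^p(G)$. By Theorem~\ref{thm:classification}, $\op{Ran}(s)$ is spanned by the columns $s(\delta_j)$, $j\in E$, which have disjoint supports $S(\{j\})$ with union $F$. So $\op{Ran}(s)$ has dimension $\op{card}(E)$ and is contained in $\ell^p(F)$. A coordinate subspace contained in $\ell^p(F)$ that contains vectors with full support $F$ must equal $\ell^p(F)$, which has dimension $\op{card}(F)$. Therefore $\op{card}(F)=\op{card}(E)$. Since the nonempty sets $S(\{j\})$ partition $F$, each must be a singleton, which is (2). The case $s=0$ is trivial: take $b=0$. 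If citing the characterization of hermitian idempotents is not acceptable, I would prove it directly from Banach--Lamperti as sketched above, and that is where I expect the real work to lie.
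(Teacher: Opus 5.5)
Your proposal is correct and follows essentially the same route as the paper. Both arguments use Banach--Lamperti and continuity of $t\mapsto\exp(itq)$ to show hermitian idempotents are diagonal $0/1$ projections, so $\op{Ran}(s)=\op{Ran}(sb)$ is a coordinate subspace, and both take $b$ to be the conjugate transpose of $s$ for (3)$\Rightarrow$(1), which is exactly your $eu^{-1}$. Your dimension count $\op{card}(E)=\op{card}(F)$ simply spells out the paper's one-line claim that disjointly supported columns span a coordinate subspace only when each support is a singleton.
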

\begin{proof}
By the finite-dimensional Banach--Lamperti theorem
\cite[Section~2]{Gardella21}, every surjective isometry of
$\ell_d^p$ is a permutation matrix multiplied by unimodular
scalars. If $q$ is a hermitian idempotent, then $\exp(itq)$ is
such an isometry for every real $t$. Since the map
$t\mapsto\exp(itq)$ is continuous and takes the value $\op{id}_d$
at $t=0$, its permutation must be the identity for every $t$.
Differentiating at $t=0$ gives that $q$ is diagonal. Its diagonal
entries are $0$ or $1$ since $q^2=q$.

Suppose now that $s$ has a Moore--Penrose inverse $b$. Then $sb$
is a hermitian idempotent and $\op{Ran}(sb)=\op{Ran}(s)$, so the
range of $s$ is a coordinate subspace. However, by Theorem
\ref{thm:classification} the nonzero columns of $s$ have disjoint
supports. Such columns can span a coordinate subspace only if
every support contains one coordinate. This proves (1)$\Rightarrow$(2).
The normalization from Definition \ref{p-circle_partitions} gives
(2)$\Leftrightarrow$(3). Finally, if (3) holds, define $b$ by
reversing the nonzero entries of $s$ and conjugating them. Then
$sb$ and $bs$ are diagonal coordinate projections, and
$sbs=s$ and $bsb=b$. Thus $b$ is a Moore--Penrose inverse.
\end{proof}

The $2\times 2$ submatrix obtained by deleting the
last column of the second operator in
Remark~\ref{IsomsubsetPI} provides a concrete example of a restricted isometry that is not Moore--Penrose
invertible.
Indeed, 
\[
s =\frac{1}{2^{1/p}} \begin{bmatrix}
1 & 0 \\
1 & 0
\end{bmatrix}
\]
is a restricted isometry on $\ell_2^p$
whose range is spanned by $\delta_1 + \delta_2$, and hence is not
Moore--Penrose invertible in $\cL(\ell_2^p)$
by Proposition~\ref{prop:mp}.

\section{Applications}\label{sec:applications}

We use our characterization to count set transformations. Although
there are in general infinitely many restricted isometries with a
given support, one can choose a particularly simple operator by
putting equal positive entries in each nonzero column. 

\begin{definition}\label{def:uniform}
Let $d,n\in\Z_{\geq1}$ and $p\in[1,\infty)$. For each set
transformation $(E,F,S)\colon X_d\to X_n$ with
$E\neq\varnothing$, we define its \textit{uniform restricted isometry}
$u_S\colon\ell_d^p\to\ell_n^p$ by
\[
(u_S\xi)(k)=
\begin{cases}
\displaystyle\frac{S_*(\xi)(k)}{\op{card}(S(E_{S,k}))^{1/p}}
&\text{if }k\in F,\\
0&\text{if }k\in X_n\setminus F,
\end{cases}
\qquad k\in X_n.
\]
We denote the collection of these nonzero operators by
$\op{URI}(\ell_d^p,\ell_n^p)$. If $E=X_d$, we call $u_S$
a \textit{uniform isometry} and write $\op{UI}(\ell_d^p,\ell_n^p)$
for the set of uniform isometries.
\end{definition}

By Proposition \ref{prop:constructed}, $u_S$ is a restricted
isometry. Also, $S(\{j\})=\supp(u_S(\delta_j))$ for $j\in E$,
so the set transformation can be recovered from $u_S$.
Consequently, the nonempty set transformations correspond
bijectively to uniform restricted isometries.

We start by fixing the number of rows that contain a nonzero entry.

\begin{lemma}\label{lem:count-rows}
Let $d,n\in\Z_{\geq1}$, $p\in[1,\infty)$, and
$k\in\{1,\ldots,n\}$. There are exactly
\[
{n\choose k}d^k
\]
operators in $\op{URI}(\ell_d^p,\ell_n^p)$ with exactly $k$
nonzero rows.
\end{lemma}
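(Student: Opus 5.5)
The count follows from a bijection between uniform restricted isometries with $k$ nonzero rows and pairs (a $k$-subset $F\subseteq X_n$, a function $F\to X_d$). By the discussion after Definition~\ref{def:uniform}, the map $(E,F,S)\mapsto u_S$ is a bijection from set transformations with $E\neq\varnothing$ onto $\op{URI}(\ell_d^p,\ell_n^p)$. Every column of $u_S$ vanishes outside $F$, and each $k'\in F$ lies in $S(\{j\})=\supp(u_S(\delta_j))$ for some $j\in E$. Hence the set of nonzero rows of $u_S$ is exactly $F$, and it suffices to count set transformations $(E,F,S)$ with $\op{card}(F)=k\geq 1$. Note that $\op{card}(F)\geq1$ already forces $E\neq\varnothing$, and conversely.

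By Proposition~\ref{pointmap}, set transformations correspond bijectively to point-set transformations $(E,F,T)$. Fix $F$ with $\op{card}(F)=k$. I claim that point-set transformations with this $F$ correspond bijectively to arbitrary functions $\phi\colon F\to X_d$.

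Given $(E,F,T)$, set $\phi(l)$ to be the unique $j\in E$ with $l\in T(j)$. This is well defined because the $T(j)$ are pairwise disjoint and cover $F$.

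Conversely, given $\phi$, set $E=\phi(F)$ and $T(j)=\phi^{-1}(\{j\})$ for $j\in E$. Each $T(j)$ is nonempty precisely because $E$ is the image of $\phi$. The fibers are pairwise disjoint and their union is $F$, so the three conditions of Definition~\ref{ptStTR} hold.

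These two constructions are mutually inverse. The one point to check is that $E$ is recovered as $\phi(F)$, which holds because every $T(j)$ with $j\in E$ is nonempty. This surjectivity subtlety is the only potential obstacle. It is exactly what lets $E$ vary freely, so that no Stirling-number constraint appears.

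Counting then gives $\binom nk$ choices of $F$ and $d^k$ choices of $\phi$ for each, for a total of $\binom nk d^k$ operators in $\op{URI}(\ell_d^p,\ell_n^p)$ with exactly $k$ nonzero rows.
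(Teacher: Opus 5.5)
Your proof is correct and is essentially the paper's argument: choose the $k$ nonzero rows (your set $F$), then for each such row choose the column carrying its unique nonzero entry (your function $\phi\colon F\to X_d$). The only difference is that you verify the bijection explicitly through point-set transformations, including the check that $E=\phi(F)$ is recovered, whereas the paper reads the count off directly from the disjointness of the column supports.
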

\begin{proof}
Because the columns have disjoint supports, each row can contain
at most one nonzero entry. We choose which $k$ of the $n$ rows
are nonzero, and then choose one of the $d$ columns for the
nonzero entry in each chosen row. These choices determine the
set transformation and therefore the uniform restricted isometry.
\end{proof}

Summing the preceding formula gives the following count. The formula for full isometries
requires that each column be used at least once.

\begin{theorem}\label{thm:count}
Let $d,n\in\Z_{\geq1}$ and $p\in[1,\infty)$. Then
\[
\op{card}(\op{URI}(\ell_d^p,\ell_n^p))
=\sum_{k=1}^n{n\choose k}d^k=(d+1)^n-1.
\]
If $n\geq d$, the number of operators in
$\op{UI}(\ell_d^p,\ell_n^p)$ is
\[
\op{card}(\op{UI}(\ell_d^p,\ell_n^p))
=\sum_{j=0}^d(-1)^j{d\choose j}(d+1-j)^n.
\]
If $n<d$, then $\op{UI}(\ell_d^p,\ell_n^p)=\varnothing$.
\end{theorem}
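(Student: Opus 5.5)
The plan is to reduce both counts to counting functions between finite sets, using the bijection noted after Definition~\ref{def:uniform} between nonempty set transformations and uniform restricted isometries.

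For the first formula, every operator in $\op{URI}(\ell_d^p,\ell_n^p)$ is nonzero, so it has some number $k\in\{1,\ldots,n\}$ of nonzero rows. Summing Lemma~\ref{lem:count-rows} over $k$ gives $\sum_{k=1}^n{n\choose k}d^k$. The binomial theorem gives $\sum_{k=0}^n{n\choose k}d^k=(d+1)^n$, and subtracting the $k=0$ term yields $(d+1)^n-1$.

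It is cleaner to see this through an encoding. By Proposition~\ref{pointmap}, a set transformation is the same as a point-set transformation $(E,F,T)$, that is, a partition of $F$ into nonempty blocks labelled by $E$. Equivalently, it is a function $\phi\colon X_n\to X_d\cup\{0\}$ with $\phi(k)=j$ if $k\in T(j)$ and $\phi(k)=0$ if $k\notin F$. Here $E$ is recovered as the set of nonzero values attained, so $E=\phi(X_n)\setminus\{0\}$. This gives a bijection between $\op{ST}(d,n)$ and all $(d+1)^n$ such functions. The empty transformation corresponds to $\phi\equiv0$, which accounts for the $-1$.

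For uniform isometries we need $E=X_d$. In the encoding this means every $j\in X_d$ lies in the image of $\phi$, while $0$ may or may not be attained. We count these by inclusion–exclusion over the set $J\subseteq X_d$ of values that are missed. Functions avoiding a fixed $J$ with $\op{card}(J)=j$ number $(d+1-j)^n$, so the count is $\sum_{j=0}^d(-1)^j{d\choose j}(d+1-j)^n$.

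When $n<d$, the $d$ nonempty disjoint sets $T(j)\subseteq X_n$ cannot exist, so $\op{UI}$ is empty. The alternating sum also vanishes in that case, since it is a $d$-th finite difference of a polynomial of degree $n<d$, but the statement restricts the formula to $n\geq d$ anyway.

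The main point requiring care is justifying that the encoding is a bijection. In particular, $E$ must be read off as the image, and nonempty sets $T(j)$ must correspond exactly to attained values. This is also where one checks that the $k=0$ and $E=\varnothing$ cases are handled consistently, namely that the zero operator is excluded from $\op{URI}$.
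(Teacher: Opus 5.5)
Your proof is correct and follows the paper's argument. The first formula comes from summing Lemma~\ref{lem:count-rows} and applying the binomial theorem, and the uniform-isometry count comes from inclusion--exclusion over the $(d+1)^n$ row choices, which your function $\phi\colon X_n\to X_d\cup\{0\}$ simply makes explicit. Your remark that the alternating sum also vanishes for $n<d$, as a $d$-th finite difference of a polynomial of degree $n$, is a correct consistency check that the paper does not state.
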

\begin{proof}
The first formula follows immediately from Lemma
\ref{lem:count-rows} and the binomial theorem. To count full
uniform isometries, every one of the $d$ columns must contain
at least one nonzero entry. Starting with $(d+1)^n$ row choices,
we remove those missing at least one column, using the
inclusion--exclusion principle. This gives the stated sum.
If $n<d$, $d$ disjoint nonempty column supports cannot fit in $n$
rows, so there are no full isometries.
\end{proof}

We can also allow minus signs in these uniform columns when
working over $\R$. The modulus of a real matrix $s$ means taking the absolute
value of each entry, and is denoted by $|s|$.

\begin{corollary}\label{cor:signed}
Let $d,n\in\Z_{\geq1}$ and $p\in[1,\infty)\setminus\{2\}$.
There are exactly $(2d+1)^n-1$ nonzero real restricted isometries
$s\colon\ell_{d,\R}^p\to\ell_{n,\R}^p$ such that
$|s|\in\op{URI}(\ell_d^p,\ell_n^p)$. Of these,
\[
\sum_{j=0}^d(-1)^j{d\choose j}(2(d-j)+1)^n
\]
are full isometries when $n\geq d$; when $n<d$ there are no
full isometries.
\end{corollary}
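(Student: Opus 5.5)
We reduce to counting sign patterns on the row-choice data used in Lemma~\ref{lem:count-rows}. Let $s\colon\ell_{d,\R}^p\to\ell_{n,\R}^p$ be a nonzero real restricted isometry with $|s|\in\op{URI}(\ell_d^p,\ell_n^p)$. Since $p\neq2$, Theorem~\ref{thm:classification} applies (real matrices are in particular complex ones, and the restricted-isometry identity is the same). Hence $s=s(E,F,S,\vf{c},g)$ with $g$ taking values in $\T\cap\R=\{1,-1\}$, because the entries of $s$ are real and $\vf{c}>0$. The hypothesis $|s|=u_{S'}$ for some set transformation $S'$ forces $S'=S$, since both are read off from the column supports. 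It also forces $\vf{c}(k)=\op{card}(S(E_{S,k}))^{-1/p}$. Conversely, Proposition~\ref{prop:constructed} shows that every $s(E,F,S,\vf{c},g)$ with this uniform amplitude, $E\neq\varnothing$, and arbitrary $g\colon F\to\{\pm1\}$ is a real restricted isometry whose modulus is $u_S$. By the uniqueness in Theorem~\ref{thm:classification}, the operators in question therefore correspond bijectively to pairs $(S,g)$, where $S$ is a nonempty set transformation and $g\colon F\to\{\pm1\}$.

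To count these pairs we encode them row by row, as in Lemma~\ref{lem:count-rows}. Each row $k\in X_n$ is either zero, or has its unique nonzero entry in some column $j$ with sign $\pm1$. That gives $2d+1$ choices per row. Every function from $X_n$ to these $2d+1$ labels arises: the set $S(\{j\})$ is the set of rows labelled $(j,\pm)$, $E$ is the set of columns used, $F$ is the set of nonzero rows, and disjointness of the $S(\{j\})$ is automatic. By Proposition~\ref{pointmap}, this determines a set transformation together with $g$. Excluding the all-zero labelling, which gives $s=0$, yields $(2d+1)^n-1$.

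For full isometries we need $E=X_d$, that is, every column must receive at least one row. Let $A_j$ be the set of labellings avoiding column $j$. A labelling avoiding a fixed set of $j$ columns has $(2(d-j)+1)^n$ possibilities, so inclusion--exclusion gives $\sum_{j=0}^d(-1)^j\binom{d}{j}(2(d-j)+1)^n$. The zero labelling is excluded automatically because $d\geq1$. If $n<d$, then $d$ disjoint nonempty supports cannot fit into $n$ rows, so there are no full isometries, exactly as in Theorem~\ref{thm:count}.

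The main point needing care is the first step: justifying that real restricted isometries fall under Theorem~\ref{thm:classification}, with real phases $\pm1$. Lemma~\ref{lem:disjoint} works verbatim over $\R$, so one could instead rerun that proof directly. It must also be checked that the hypothesis $|s|\in\op{URI}$ pins down the amplitudes, so that sign choices are the only freedom. The counting itself is routine.
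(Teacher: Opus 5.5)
Your proposal is correct and follows the paper's argument: you label each row by one of $2d+1$ choices (zero, or a column together with a sign), discard the all-zero labelling, and use inclusion--exclusion over omitted columns to count the full isometries. Your detour through Theorem~\ref{thm:classification} is more than you need, because the hypothesis $|s|=u_S$ already forces disjoint column supports with the uniform moduli; still, with your fallback of rerunning that proof over $\R$, that step is sound.
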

\begin{proof}
Each nonzero row has $2d$ possible choices: one of $d$
columns and one of two signs. There is one further choice,
the zero row. Thus there are $(2d+1)^n$ row choices,
including the zero matrix. For full isometries, every one of
the $d$ columns must appear at least once, and
inclusion--exclusion gives the second formula.
\end{proof}

We finish with a geometric application of Theorem~\ref{thm:classification}.
Instead of counting support patterns, we fix one and ask how its
nonzero entries can vary.

In Example \ref{Example:E_{S,k}&S_*}, the two nonzero entries in
the first column do not have to be equal: their positive moduli
can be any two numbers $c_1,c_2$ such that $c_1^p+c_2^p=1$.
The same idea works for any fixed set transformation. We describe
this because it also explains why counting the uniform operators
is different from counting all restricted isometries.

Write $\Delta_m^\circ$ for the interior of the standard
$m$-simplex, where $\Delta_0^\circ$ is one point.

\begin{proposition}\label{prop:strata}
Let $d,n\in\Z_{\geq1}$, let $p\in[1,\infty)\setminus\{2\}$,
and let $(E,F,S)\colon X_d\to X_n$ be a set transformation with
$\op{card}(E)=r\geq1$ and $\op{card}(F)=k$.
The complex restricted isometries whose nonzero columns have
supports $S(\{j\})$, for $j\in E$, form a space homeomorphic to
\[
\prod_{j\in E}\left(\Delta_{\op{card}(S(\{j\}))-1}^\circ
\times\T^{\op{card}(S(\{j\}))}\right).
\]
This space has real dimension $2k-r$. Over $\R$, the operators
with the same supports have $2^k$ connected components, each
homeomorphic to
\[
\prod_{j\in E}\Delta_{\op{card}(S(\{j\}))-1}^\circ
\]
and of dimension $k-r$.
\end{proposition}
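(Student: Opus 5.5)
By Theorem~\ref{thm:classification}, the restricted isometries whose nonzero columns have supports $S(\{j\})$, $j\in E$, are exactly the operators $s(E,F,S,\vf{c},g)$ with this fixed set transformation. Here $\vf{c}\colon F\to\R_{>0}$ satisfies \eqref{eq:weight_1} and $g\colon F\to\T$ is arbitrary. Uniqueness in that theorem makes the map $(\vf{c},g)\mapsto s(E,F,S,\vf{c},g)$ a bijection from the parameter set onto our family. The plan is first to show this parameter set is homeomorphic to the claimed product, and then to show the bijection is a homeomorphism onto its image in $\cL(\ell_d^p,\ell_n^p)\cong\C^{n\times d}$.

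\textbf{Step 1 (splitting the parameters).} By Lemma~\ref{RMKsinglet} and Lemma~\ref{indexingF}, the sets $S(E_{S,k})$ are exactly the blocks $S(\{j\})$, $j\in E$, and these partition $F$. So \eqref{eq:weight_1} is equivalent to $\sum_{k\in S(\{j\})}\vf{c}(k)^p=1$ for each $j\in E$. The constraints on different blocks are independent, and $g$ is unconstrained. Hence the parameter space is
\[
\prod_{j\in E}\Big(\{x\in\R_{>0}^{S(\{j\})}\colon \textstyle\sum_k x_k^p=1\}\times\T^{S(\{j\})}\Big).
\]
The map $x\mapsto(x_k^p)_k$ is a homeomorphism from the positive part of the $p$-sphere onto the open simplex $\Delta^\circ_{\op{card}(S(\{j\}))-1}$. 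Its inverse is $y\mapsto(y_k^{1/p})_k$, and both directions are continuous on positive coordinates. When $\op{card}(S(\{j\}))=1$, the block is a single point, consistent with the convention for $\Delta_0^\circ$.

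\textbf{Step 2 (topology on the operators).} The matrix entries of $s(E,F,S,\vf{c},g)$ are $\vf{c}(k)g(k)$ at position $(k,j)$ for $k\in S(\{j\})$, and $0$ elsewhere. This is plainly continuous in $(\vf{c},g)$. The inverse sends an entry $z\neq0$ to the pair $(|z|,z/|z|)$, which is continuous on $\C\setminus\{0\}$. On the family every relevant entry is nonzero, so the inverse is continuous and we have a homeomorphism. This is the step needing the most care, since continuity of the inverse relies on the entries staying away from $0$. That holds because the supports are fixed exactly.

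\textbf{Step 3 (dimension count).} Block $j$ contributes $(\op{card}(S(\{j\}))-1)+\op{card}(S(\{j\}))$ real dimensions. Summing over $j\in E$ and using $\sum_j\op{card}(S(\{j\}))=k$ gives $2k-r$.

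\textbf{Step 4 (the real case).} Over $\R$, the same argument holds with $g\colon F\to\{\pm1\}$, which gives the space $\prod_j\Delta^\circ_{\op{card}(S(\{j\}))-1}\times\{\pm1\}^F$. Each simplex product is connected, being a product of convex sets. The $2^k$ sign patterns are separated: each entry in the support stays nonzero, so its sign is locally constant on the family. Therefore there are exactly $2^k$ connected components, each homeomorphic to the simplex product and of dimension $k-r$.
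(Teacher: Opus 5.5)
Your proposal is correct and follows essentially the same route as the paper: you parametrize the family by the amplitude and phase data from Theorem~\ref{thm:classification}, send $\vf{c}(l)\mapsto\vf{c}(l)^p$ into open simplices column by column, attach independent phases (signs over $\R$), and add the dimensions. Your Step~2 makes explicit the continuity of the inverse via $z\mapsto(|z|,z/|z|)$ on the fixed nonzero entries, which the paper only asserts.
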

\begin{proof}
Fix $j\in E$. By Theorem \ref{thm:classification}, the
positive amplitudes of the entries in the column $s(\delta_j)$
satisfy
\[
\sum_{l\in S(\{j\})}\vf{c}(l)^p=1.
\]
The numbers $t(l)=\vf{c}(l)^p$, for $l\in S(\{j\})$,
are positive and sum to one. Thus they lie in the interior of a
simplex with $\op{card}(S(\{j\}))$ vertices. Conversely,
from such $t(l)$ we recover the amplitudes by taking
$\vf{c}(l)=t(l)^{1/p}$. Independently, each nonzero complex entry
has a phase in $\T$. These choices depend continuously on the
entries of $s$, and conversely determine $s$ continuously.
For the real case each nonzero entry has one of two possible
signs, constant on a connected component. Since there are $k$ nonzero entries, there are $2^k$ sign choices. The stated
dimensions follow by adding the contributions from each column:
\[
\sum_{j\in E}\big(\op{card}(S(\{j\}))-1\big)
=k-r.
\]
\end{proof}

\begin{example}
For the set transformation from Example
\ref{Example:E_{S,k}&S_*}, we have $r=3$ and $k=4$.
The positive moduli of the entries in its first column vary on $\Delta_1^\circ$, while the other two columns each have modulus
one. There are four independent complex phases, so the resulting
operators form a space homeomorphic to
$\Delta_1^\circ\times\T^4$, of real dimension $5=2k-r$.
Over $\R$ there are $2^4=16$ connected components, each an open
interval.
\end{example}

\section*{Funding}
All authors were partially supported by the Department of Mathematics at the University of Colorado
Boulder through the Summer 2025 REU(G) program.

\section*{Use of AI tools}

OpenAI's ChatGPT (GPT-5.6 Sol) was used for editorial assistance during the final stage of preparation for this manuscript, including for correction of typos and 
suggestions to improve the introduction. The mathematical ideas, constructions, and results were developed by the five authors
during the Summer of 2025, who take full responsibility for the mathematical content of the article.

\bibliographystyle{plain}
\bibliography{lpisom}

\vfill 
\end{document}